\documentclass[12pt,reqno]{amsart}

\usepackage{amsfonts,appendix,color, latexsym, amssymb, amsmath, fullpage, bbm}

\title[Truncations of Kronecker Products]{Local Spectrum of Truncations of Kronecker Products of Haar Distributed Unitary Matrices}
\author{Brendan Farrell}
\address{Computing and Mathematical Sciences, California Institute of Technology, Pasadena, CA 91125, U.S.A. }
\email{farrell@cms.caltech.edu}
\author{Raj Rao Nadakuditi}
\address{Department of Electrical Engineering and Computer Science,
University of Michigan, Ann Arbor, MI 48109, USA}
\email{rajnrao@eecs.umich.edu}
\date{\today}
\frenchspacing



\newcommand{\C}{\mathbb C}
\newcommand{\E}{\mathbb E}
\def \P {\mathbb{P}}


\newtheorem{theorem}{Theorem}[section]

\newtheorem{lemma}{Lemma}[section]

\def\C {\mathbb{C}}
\def\P {\mathbb{P}}
\def\NN {\mathbb{N}}

\def\E {\mathbb{E}}
\def\U {\mathcal{U}}
\def\W {\mathcal{W}}

\def\N {\mathcal{N}}
\def\tr {\textnormal{tr}}

\def\Wstar {\mathcal{W}^*}

\def\Rj {R_j}

\def\Qj {Q_j}
\def\uj {u_j}
\def\ujs {u_j^*}

\def\onk {\frac{1}{n^k}}
\def\O {\mathcal{O}}
\def\o {\mathit{o}}
\def\moz {\frac{-1}{z}}

\def\Uma {U-\gamma Uvv^*}

\def\ok {\otimes k}
\def\vvs {vv^*}
\def\oh {\frac{1}{2}}
\def\oz {\frac{1}{z}}
\def\cz {c_0}
\def\tr {\textnormal{tr}}
\def\Qjj {Q_{j,j}}
\def\Pjj {P_{j,j}}
\def\region {z\in \C:\Re z\in [\lambda_-+\kappa,\lambda_+-\kappa],\;\Im z\in (\eta,\cb]}
\def\interval{[\lambda_-+\kappa, \lambda_+-\kappa]}
\def\lm {\lambda_-}
\def\lp {\lambda_+}
\def\ez {\eta_0}
\def\step {E+i(\eta_0-n^{-2})}
\def\overm {\frac{1-p}{zm(z)}}
\def\overmM {\frac{1-p}{zm_M(z)}}

\def\ca {c_a}
\def\cb {c_b}
\def\cc {c_c}

\def\An {A_n}
\def\Bn {B_n}
\def\Un {U_n}
\def\Ub {\mathbb{U}}
\def\nn {\langle n\rangle}

\def\nnku {\nn^k_u}
\def\Wo {W_1}
\def\Wos {W_1^*}

\def\wj {w_j}
\def\wjs {w_j^*}

\def\sumu {\sum_{j\in\nnku}}
\def\sumnn {\sum_{j\in\nn^k}}
\def\sumc {\sum_{j\in\nn\backslash\nnku}}

\begin{document}
\begin{abstract}
We address the local spectral behavior of the random matrix 
\begin{equation*}
\Pi_1 U^{\ok} \Pi_2 U^{\ok *} \Pi_1,
\end{equation*}
where $U$ is a Haar distributed unitary matrix of size $n\times n$, the factor $k$ is at most $c_0\log n$ for a small constant $c_0>0$, 
and $\Pi_1,\Pi_2$ are arbitrary projections on $\ell_2^{n^k}$ of ranks proportional to $n^k$. 
We prove that in this setting the $k$-fold Kronecker product behaves similarly to the well-studied case when $k=1$. 
\end{abstract}
\maketitle

{ AMS Subject Classification:  15B52 }

\vspace{.2in}

{ Keywords: Random matrices, Unitary matrices, Truncation, Compression}

\section{Introduction}
A fundamental question in matrix analysis is: How are the eigenvalues of the sum or product of two matrices related to the eigenvalues of the individual matrices? This simple question has a complicated solution because the answer depends not just on the eigenvalues but on the relationship between the eigenspaces of the individual matrices. 

However, if one lets the eigenspaces of one of the matrices be isotropically random relative to the other, then in the limit of large matrices, we can make analytical progress. Specifically, if $\Un$ is a Haar distributed unitary matrix and if $\{\An\}_{n\in\NN}$ and $\{\Bn \}_{n\in\NN}$ are two sequences of bounded self-adjoint
matrices ($\An,\Bn\in\C^{n\times n}$) then the spectral distribution of
\begin{equation*}
\An\Un\Bn\Un^*\An^*\;\;\;\textnormal{and}\;\;\;\An +\Un\Bn\Un^*
\end{equation*}
in the limit of large matrices is completely characterized by an additive (or multiplicative, respectively) `free convolution' \cite{VDN92} operation involving only the individual limiting spectral distributions of $\{\An\}$ and $\{\Bn\}$~\cite{Voi91}. 

Let $\Ub(n)$ denote the group of unitary matrices of size $n\times n$, and consider the matrix
\begin{equation}
C:= \Pi_1 U \Pi_2 U^* \Pi_1\label{eq:nokronecker},
\end{equation}
where $\Pi_{1}$ and $\Pi_{2}$ are two arbitrary orthogonal projections on $\ell_2^{n}$ of ranks, say,
$pn$ and $qn$ respectively and  $U$ has Haar distribution (or uniform distribution) on $\Ub(n)$. 
Then, a consequence of Voiculescu's theorem is that the limiting spectral measure of $C$ is given by the free 
multiplicative convolution of the limiting spectral measures of the individual projection matrices.  
The spectral measures of projection matrices are Bernoulli distributions, and their free multiplicative convolution  is $f_M$, which we define shortly. 
In particular, we first define the empirical distribution function $F$ by
\begin{equation*}
F(x)=\frac{1}{n}\sharp\{\lambda_i(\Pi_1 U \Pi_2 U^* \Pi_1)\leq x\}.
\end{equation*}
Then as $n$ tends to infinity, $F$ converges almost surely to the distribution
\begin{equation*}
f_M(x)dx:=(1-\min(p,q))\delta_0(x)+(\max(p+q-1,0))\delta_1(x)+\frac{\sqrt{(\lp-x)(x-\lm)}}{2\pi x(1-x)} I_{[\lm,\lp]}(x)dx,
\end{equation*}
where
\begin{equation*}
\lambda_{\pm}:=p+q-2pq\pm\sqrt{4pq(1-p)(1-q)}.
\end{equation*}

We use $f_M$ to denote that this density is the limiting density for matrices used for multivariate analysis of
variance in statistics (MANOVA). The density $f_M$ was first determined in this setting by Wachter~\cite{Wac80}, though it appears
earlier in work by Kesten on regular graphs~\cite{Kes59}. 
The matrix~\eqref{eq:nokronecker} has been studied by a number of authors. 
In the free probability community \cite{CC04,Col05} provide extensive results and treat this matrix in the context of the Jacobi ensemble and 
classical random matrix theory.  
The paper~\cite{ZS00} studies the absolute values of the eigenvalues of $\Pi_1 U \Pi_2$ and has led to stronger results on the 
eigenvalue distribution of such matrices, see~\cite{DJL12}. 
In the statistics community, Tracy-Widom behavior of the largest eigenvalue of the matrix~\eqref{eq:nokronecker} was established in~\cite{Joh08}. 

The present  work is motivated by the question of whether the same limit distribution arises  when $U$ is not uniformly distributed on $\Ub(n)$. 
We heuristically conjecture that the spectral distributions will be close when $\Un$ is distributed such that a `typical' realization is `close' to a typical Haar distributed unitary matrix. Since a typical Haar distributed  matrix in $\mathbb{U}(n^k)$ 
has entries with magnitude $O(1/\sqrt{n^k})$, 
a `sufficiently random' $U$ with entries  having magnitude $O(1/\sqrt{n^k})$ might  
exhibit the same limiting distribution. This paper is a first step in the general program of trying to quantify these notions of closeness.

To that end, we consider unitary matrices that are formed from the Kronecker (or tensor) product of uniformly distributed random unitary matrices. In particular we consider unitary matrices that are constructed as follows.
Let $n,k$ be integers  and  $\Pi_{1}$ and $\Pi_{2}$  arbitrary orthogonal projections on $\ell_2^{n^k}$ of ranks
$pn^k$ and $qn^k$ respectively.
Let $U$ have Haar distribution on $\Ub(n)$, and consider the matrix
\begin{equation}
\Pi_1 U^{\ok} \Pi_2 U^{\ok *} \Pi_1. \label{equation:matrix}
\end{equation}
We will show that, for large $n$ and appropriate $k$, 
the eigenvalues of~\eqref{equation:matrix} are distributed similarly to those of~\eqref{eq:nokronecker}. 
Note that 
Tensor products of random unitary matrices have been recently studied by several authors 
\cite{tkocz2011tensor,belinschi2012eigenvectors,collins2012towards}. 
In particular, in~\cite{tkocz2011tensor} it is shown that when $k=2$ and $n$ tends to infinity the spacing between eigenvalues of the 
models~\eqref{eq:nokronecker} and~\eqref{equation:matrix} differ qualitatively. 
Thus, while the eigenvalue distributions of the matrices considered here are preserved by taking a tensor product, 
the spacings between the eigenvalues of $U^{\otimes k}$ are not for $k=2$. 
The same is presumably also true for larger powers of $k$. 

One motivation for studying $U^{\otimes k}$ is that it has more structure and less randomness than a Haar distributed unitary matrix of the same dimensions, 
yet behaves similarly. 
One area where such randomness reduction is of interest is quantum information theory \cite{musz2013unitary}. 
For example,~\cite{GAE07} addresses subsets of $\mathbb{U}(n)$ that can be used to approximate the expectation of a function of $U$. 
The present work shows that in the projection setting, Haar distribution for $U$ in high dimensions is close to the distribution generated using 
much less randomness and requiring less computational complexity. 
Thus, an expectation of $U^{\otimes k}$, where $U$ has Haar distribution on $\mathbb{U}(n)$ could be used to approximate an expectation of 
$U'$, where $U'$ has Haar distribution on $\mathbb{U}(n^k)$. 

We state our precise result next and then provide a discussion of further topics and an outline of our approach. 

\section{Main result}
First, let us introduce the Stieltjes transform of a distribution $F$:
\begin{equation*}
m_F(z):=\int \frac{1}{x-z}dF(x)
\end{equation*}
for $z\in\C^+:=\{z\in\C:\;\Im z>0\}$. Note the Stieltjes transform of a distribution is an analytic map
from $\C^+$ to $\C^+$.
The Stieltjes transform of $f_M$ is available~\cite{CC04}; it is
\begin{equation*}
m_M(z):=\frac{z+(p+q-2)+\sqrt{z^2-2(p+q-2pq)z+(p-q)^2}}{2z(1-z)}.
\end{equation*}

We are now ready to state the main result.
\begin{theorem}\label{theorem:main}
For $E\in [\lm,\lp]$,  let $\N(E,\eta)$ denote the number of eigenvalues of~\eqref{equation:matrix} in $[E-\frac{\eta}{2},E+\frac{\eta}{2}]$.
Assume that $0<\cz<\frac{1}{2}$ and $k\leq \cz \log n$,
and set
\begin{equation*}
m(z):=\frac{1}{n^k}\tr \left(\Pi_1 U^{\ok} \Pi_2 U^{\ok *} \Pi_1-zI\right)^{-1}.
\end{equation*}
There exist absolute constants $C,\rho>0$ such that for all $s>0$ and $\alpha,\beta>0$ satisfying $\alpha+2\beta=\frac{1}{2}-\cz$, if
\begin{equation}
\eta:=\frac{\rho^{1/4}\log^{\frac{s}{2}+\frac{5}{2}} n}{n^\beta},\label{equation:defeta}
\end{equation}
then for all $\kappa>0$
\begin{equation}
\P\left(\sup_{E\in \interval}   |m(E+i\eta)-m_M(E+i\eta)|>\frac{C}{n^\alpha\kappa^{2}}   \right)\leq 2n^{k+2}e^{-\log^s n}\label{equation:maineqprop}
\end{equation}
and
\begin{equation}
\P\left(\sup_{E\in \interval}   \left|\frac{\N(E,\eta)}{\eta n^k}-f_M(E)\right|>\frac{C}{n^\alpha\kappa^{2}}   \right)\leq 2n^{k+2}e^{-\log^s n}.\label{equation:maineqthm}
\end{equation}
\end{theorem}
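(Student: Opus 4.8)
The plan is to prove the local law~\eqref{equation:maineqprop} for the Stieltjes transform $m$ first, and then deduce the eigenvalue-counting statement~\eqref{equation:maineqthm} from it by a standard Helffer--Sj\"ostrand / Cauchy-integral argument on the scale $\eta$. For the first part, the strategy is to show that $m(z)$ approximately satisfies the self-consistent equation that characterizes $m_M(z)$. Recall that $m_M$ is determined by a fixed-point relation arising from the free multiplicative convolution of two Bernoulli laws; equivalently, writing $z m(z)$ in terms of partial traces, one obtains a system relating $\tr(\Pi_1(\cdots)^{-1})$ and $\tr(\Pi_2 U^{\ok *}\Pi_1(\cdots)^{-1}U^{\ok})$. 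I would introduce the resolvent $R(z):=(\Pi_1 U^{\ok}\Pi_2 U^{\ok *}\Pi_1 - zI)^{-1}$ and its companion obtained by swapping the roles of $\Pi_1$ and the conjugated $\Pi_2$, and derive two coupled identities whose self-consistent (deterministic) solution is exactly $(m_M, \text{its companion})$.

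The heart of the matter is a concentration estimate: quantities of the form $\frac{1}{n^k}\tr\bigl(A\, U^{\ok} B\, U^{\ok *}\bigr)$, for fixed matrices $A,B$ (which in our application are resolvent blocks and projections), must concentrate around $\frac{1}{n^k}\tr(A)\cdot\frac{1}{n^k}\tr(B)$ with fluctuations of polynomial-in-$n$ size, \emph{uniformly} over the relevant $z$. The key structural input is that $U^{\ok}$ is built from a \emph{single} $n\times n$ Haar matrix $U$, so all the randomness lives in $U$; I would exploit concentration of measure on $\Ub(n)$ (a log-Sobolev / Gromov--L\'evy inequality on the unitary group, with the sharp constant of order $n$) applied to the function $U\mapsto \frac{1}{n^k}\tr(A U^{\ok}B U^{\ok *})$. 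The subtlety is controlling the Lipschitz constant of this function: each of the $k$ tensor slots contributes, so a crude bound loses a factor $k$, and since $k$ can be as large as $\cz\log n$ one must be careful that this only costs a power of $\log n$ (which is why $\eta$ in~\eqref{equation:defeta} carries the $\log^{s/2+5/2} n$ factor and why $\cz<\tfrac12$ is needed so that $\alpha,\beta>0$ can be chosen). The constant $\rho$ should emerge as (a power of) the log-Sobolev constant of $\Ub(n)$ normalized by $n$, and the $n^{k+2}$ prefactor in the probability bound comes from a union bound over a net of $z$-values together with a union bound over the (at most $n^k$) eigenvalues when passing to $\N(E,\eta)$.

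With concentration in hand, the argument is a bootstrap: on the high-probability event where all the relevant traces are close to their ``expected'' products, the random pair $(m(z), \text{companion})$ satisfies the self-consistent system up to an additive error of size $O(n^{-\alpha}\,\text{polylog})$; a stability analysis of that fixed-point system on the domain $\Re z\in\interval$, $\Im z=\eta$ then upgrades this to $|m(z)-m_M(z)|=O(n^{-\alpha}\kappa^{-2})$, the $\kappa^{-2}$ reflecting the blow-up of the stability constant as $\Re z$ approaches the spectral edges $\lm,\lp$ (where the square root in $m_M$ degenerates). Finally, to get~\eqref{equation:maineqthm}, I would write $\frac{\N(E,\eta)}{\eta n^k}$ as essentially $\frac{1}{\pi}\Im m(E+i\eta)$ up to a smoothing error controlled by the mesoscopic scale $\eta$ and by $\Im m_M$, and compare with $f_M(E)=\frac{1}{\pi}\lim_{\eta\downarrow0}\Im m_M(E+i\eta)$; the difference $|\Im m_M(E+i\eta)-f_M(E)|$ is $O(\eta)$ away from the edges, which is absorbed into the stated error since $\eta$ is a negative power of $n$.

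\medskip

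\emph{Main obstacle.} The delicate step is the Lipschitz/concentration bound for $U\mapsto \frac{1}{n^k}\tr(A U^{\ok} B U^{\ok *})$ with explicit control of how the Lipschitz constant grows with $k$; a naive differentiation in the $i$-th tensor factor produces a term with $U^{\ok}$ in the other $k-1$ slots, and one must argue that operator-norm bounds on the relevant $A,B$ (resolvents at height $\eta$, hence of norm $\le \eta^{-1}$) combine with the tensor structure so that the total Lipschitz constant is only $\mathrm{poly}(\eta^{-1})\cdot\mathrm{poly}(k)$ rather than exponential in $k$ — this is exactly what forces the constraint $k\le\cz\log n$ and dictates the precise powers of $\log n$ appearing in~\eqref{equation:defeta}.
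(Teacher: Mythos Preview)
Your high-level architecture --- concentration of the random input, approximate self-consistent equation, stability, then Stieltjes-to-counting --- matches the paper's, but the implementation differs in two substantive ways.

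First, the paper does not use log-Sobolev/Gromov--L\'evy. Instead it invokes Chatterjee's exchangeable-pairs concentration on $\Ub(n)$ (Proposition~2.5 of~\cite{Cha07a}): one couples $U$ with $U'=U(I-\gamma vv^*)$ for $v$ uniform on $S^{n-1}$ and a random phase $\gamma$, and bounds $\E|f(U)-f(U')|^2$ directly for the specific scalar $f(U)=w_j^*PR_j(z)Pw_j$. This sidesteps the issue you flagged as the ``main obstacle'', because one never needs a global Lipschitz bound for $U\mapsto U^{\otimes k}$; one only needs to control the effect of a \emph{rank-one} multiplicative perturbation of $U$ on $U^{\otimes k}$, done via the telescoping identity $U^{\otimes k}-(U')^{\otimes k}=\sum_{l=1}^k T_l$. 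Note also that the resulting bound is \emph{not} polynomial in $k$ as you anticipated: a factor $(1+|\gamma|)^{k}\le 4^k$ appears, and this is exactly why $k\le c_0\log n$ is required --- so that $4^k\le n^{2c_0}$ remains polynomial in $n$, leaving room for $\alpha+2\beta=\tfrac12-c_0>0$.

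Second, the self-consistent equation is not obtained via a companion resolvent system but via a column-by-column Sherman--Morrison expansion: writing $P\W Q\Wstar P=\sum_j Q_{jj}Pw_jw_j^*P$ and using the rank-one resolvent identity gives $m(z)=-\frac{1}{zn^k}\sum_j(1+Q_{jj}w_j^*PR_jPw_j)^{-1}$, after which concentration of $w_j^*PR_jPw_j$ around a deterministic $D(z)$ and a short algebraic manipulation yield the scalar equation whose exact solution is $m_M$. The stability step is then an explicit continuity argument: one starts at $\Im z=c_b=O(1)$ where trivial resolvent bounds hold, and decreases $\Im z$ to $\eta$ in increments of $n^{-2}$, using at each step that $|m-m_M|$ small at the previous height keeps the relevant denominators bounded below (this is where the $\kappa^{-2}$ comes from, via the square-root edge behaviour of $\Im m_M$). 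The prefactor $n^{k+2}$ is the union bound over $n^k$ columns, $O(n^2)$ vertical steps, and an $O(n)$ net in $E$.

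One caveat on your plan: you describe concentrating $\frac{1}{n^k}\tr(AU^{\otimes k}BU^{\otimes k*})$ for ``fixed'' $A,B$, but in the application $A$ is a resolvent block and hence itself depends on $U$. The paper's Chatterjee approach handles this transparently because one bounds $|f(U)-f(U')|$ for the full function $f$; a log-Sobolev argument would require differentiating through the resolvent as well, which is feasible but is an extra layer you have not accounted for.
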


Our second theorem is a variation on the first.
We prove Theorem~\ref{theorem:main}; the proof of Theorem~\ref{theorem:second} follows by the same arguments and
is sketched at the end of the paper.

\begin{theorem}\label{theorem:second}
Let $U$ be uniformly distributed on $\Ub(n_1)$, let $V$ be an arbitrary element of $\Ub(n_2)$ and $\Pi_1,\Pi_2$
arbitrary orthogonal projections on $\ell^{n_1n_2}_2$ with respective ranks $pn_1n_2$ and $qn_1n_2$.
For $E\in [\lm,\lp]$,  let $\N(E,\eta)$ denote the number of eigenvalues of
\begin{equation}\label{eq:UV}
\Pi_1 (U\otimes V)\Pi_2 (U\otimes V)^* \Pi_1
\end{equation}
in $[E-\frac{\eta}{2},E+\frac{\eta}{2}]$, and set
\begin{equation*}
m(z):=\frac{1}{n_1n_2}\tr \big(\Pi_1 (U\otimes V)\Pi_2 (U\otimes V)^* \Pi_1   -zI\big)^{-1}.
\end{equation*}
There exist absolute constants $C,\rho>0$ such that for all $s>0$ and $\alpha,\beta>0$ satisfying $\alpha+2\beta=\frac{1}{2}$, if
\begin{equation*}
\eta:=\frac{\sqrt{\rho}\log^{\frac{s}{2}+4} n_1}{n_1^\beta},
\end{equation*}
then for all $\kappa>0$
\begin{equation*}
\P\left(\sup_{E\in \interval}   |m(E+i\eta)-m_M(E+i\eta)|>\frac{C}{n_1^\alpha\kappa^{2}}   \right)\leq 2n_1^{2}e^{-\log^s n_1}
\end{equation*}
and
\begin{equation*}
\P\left(\sup_{E\in \interval}   \left|\frac{\N(E,\eta)}{\eta n_1n_2}-f_M(E)\right|>\frac{C}{n_1^\alpha\kappa^{2}}   \right)\leq 2n_1^{2}e^{-\log^s n_1}.
\end{equation*}
\end{theorem}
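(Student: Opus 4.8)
The plan is to reduce the mixed tensor product $U \otimes V$, with $U$ Haar on $\Ub(n_1)$ and $V$ an arbitrary fixed unitary on $\Ub(n_2)$, to the setting already handled in the proof of Theorem~\ref{theorem:main}. The essential observation is that all the randomness still lives in the single Haar factor $U$, and that $V$, being deterministic and unitary, contributes nothing to concentration and only reshuffles the deterministic ``environment'' that the projections $\Pi_1,\Pi_2$ present to $U$. Concretely, I would first set up the resolvent $m(z) = \frac{1}{n_1 n_2}\tr\big(\Pi_1(U\otimes V)\Pi_2(U\otimes V)^*\Pi_1 - zI\big)^{-1}$ exactly as in the $k$-fold case, and run the same self-consistent (fixed-point) analysis for $m(z)$, tracking how the truncated moment expansion of the resolvent in powers of $U$ and $U^*$ behaves. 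Since $U\otimes V = (U\otimes I)(I\otimes V)$ and $I\otimes V$ commutes past nothing in general but is itself unitary and norm-one, each occurrence of $U\otimes V$ or its adjoint in a word can be rewritten so that only $U\otimes I$ carries probabilistic content, with $I\otimes V$ absorbed into the surrounding projections — i.e. replacing $\Pi_2$ by $(I\otimes V)^*\Pi_2(I\otimes V)$, which is again an orthogonal projection of the same rank $q n_1 n_2$. This is why the densities $f_M$ and $m_M$ are unchanged: they depend only on $p,q$.

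Next I would import the two probabilistic inputs that drove the proof of Theorem~\ref{theorem:main}: (i) the concentration of Weingarten-type polynomials / traces of words in $U$ and $U^*$ around their expectations, which in the present case involves only $U$ on $\Ub(n_1)$ rather than a $k$-fold product, so the relevant dimension controlling the Lipschitz constant and hence the Gaussian-type deviation bound is $n_1$, not $n_1^{k}$; and (ii) the resulting estimate that $\E\, m(z)$ satisfies the $m_M$ fixed-point equation up to an error of order $\eta^{-1} n_1^{-1/2}\,\mathrm{polylog}$. Because there is only one Haar factor, the combinatorial bookkeeping over which tensor legs are ``active'' collapses: there is no sum over $j \in \nn^k$ and no exponential-in-$k$ loss, which is exactly why the exponent constraint becomes $\alpha + 2\beta = \tfrac12$ (the $-\cz$ correction in Theorem~\ref{theorem:main} came from needing $k \le \cz\log n$ room) and why the failure probability is only $2n_1^{2}e^{-\log^s n_1}$ rather than $2n^{k+2}e^{-\log^s n}$. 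I would then choose $\eta = \sqrt{\rho}\,\log^{s/2+4} n_1 / n_1^\beta$ so that the stability of the fixed-point map on the region $\{\Re z \in \interval,\ \Im z = \eta\}$ converts the bound on $|\E m - m_M|$ plus the concentration of $m$ around $\E m$ into the claimed uniform (in $E$) bound $C/(n_1^\alpha \kappa^2)$; the $\kappa^{-2}$ reflects, as in Theorem~\ref{theorem:main}, the blow-up of the derivative of $m_M$ near the spectral edges $\lm,\lp$. Passing from the bound on $m(E+i\eta)$ to the counting bound $|\N(E,\eta)/(\eta n_1 n_2) - f_M(E)|$ is then the standard Helffer--Sjöstrand / Stieltjes-inversion step, used verbatim from the first proof, with $n^k$ replaced by $n_1 n_2$ in the normalization of the trace.

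The main obstacle — and the only place requiring genuine care rather than transcription — is verifying that the deterministic factor $I\otimes V$ can indeed be absorbed without degrading the error estimates: one must check that every place in the proof of Theorem~\ref{theorem:main} where a property of $\Pi_2$ (orthogonality, rank, operator norm $\le 1$, and the specific way its normalized trace enters the Weingarten expansion) was used remains valid for $(I\otimes V)^*\Pi_2(I\otimes V)$, and that the concentration inequality for functions of $U$ is applied with the correct Lipschitz constant after this conjugation. Once that is in place, the remainder of the argument is a simplification of the $k$-fold proof — fewer terms, no $k$-dependent losses — so I would write it as: ``the proof follows that of Theorem~\ref{theorem:main} \emph{mutatis mutandis}, replacing $U^{\ok}$ by $U\otimes V$, $\Pi_2$ by $(I\otimes V)^*\Pi_2(I\otimes V)$, and $n^k$ by $n_1 n_2$ throughout; the absence of the tensor-power structure removes the restriction $k\le\cz\log n$ and improves the exponent identity to $\alpha+2\beta=\tfrac12$ and the probability bound to $2n_1^2 e^{-\log^s n_1}$,'' and then spell out only the absorption step and the bookkeeping of exponents in detail.
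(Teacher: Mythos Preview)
Your proposal follows the same overall strategy as the paper: rerun the proof of Theorem~\ref{theorem:main} and note that with a single Haar factor the $k$-dependent losses disappear, giving $\alpha+2\beta=\tfrac12$ and the sharper probability bound. The paper's own proof is three lines: in Lemma~\ref{lemma:concentration} the exponent $k$ becomes $1$ and the operator $M$ of~\eqref{defM} is now unitary (hence $\|M\|\le 1$ rather than $(1+|\gamma|)^{k-1}$); all other lemmas and the continuity argument carry over verbatim.

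Two places where your description of the internals does not match what the paper actually does. First, there is no Weingarten calculus or moment/word expansion anywhere in the argument; the implicit equation for $m(z)$ is obtained from rank-one resolvent identities (equations~\eqref{eq:fordenom}--\eqref{eq:implicitfinite}), and the probabilistic input is Chatterjee's concentration inequality on $\Ub(n)$ applied to the individual quadratic forms $w_j^* P R_j(z) P w_j$, not a concentration of $m(z)$ around $\E\, m(z)$. Second, your absorption $\Pi_2\mapsto (I\otimes V)\Pi_2(I\otimes V)^*$ is a legitimate preprocessing but does not literally reduce to the $k=1$ case of Theorem~\ref{theorem:main}, since the ambient space is still $\ell_2^{n_1n_2}$ and $U\otimes I$ is not Haar there; what it really buys is exactly what the paper states directly, namely that under the Chatterjee perturbation $U\to U(I-\gamma vv^*)$ the difference $T=\W-\W'$ has a single summand and the residual factor has unit norm. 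The ``main obstacle'' you single out is not one: a unitary conjugate of $\Pi_2$ trivially preserves rank, orthogonality, and operator norm, and Lemma~\ref{lemma:concentration} uses nothing about $\Pi_2$ beyond these together with unitarity of $W_1,W_2$.
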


\subsection{Remarks}
We point out several areas for further exploration. 
It is unclear what happens to the eigenvalue distribution of a matrix  of the form~\eqref{equation:matrix} when $n$ remains fixed but $k$ tends 
to infinity, or for a matrix of the form~\eqref{eq:UV} when the dimension of either the random or deterministic matrix is fixed and the other 
tends to infinity. 
Another variation on the work presented here would be to consider the Kronecker product of $k$ independent Haar distributed unitary matrices, 
possibly of varying dimensions, and determine their spectral behavior as a large unitary matrix as well as when truncated.

\subsection{Outline of the Approach}

We prove the first claim of each theorem, and the second then follows. 
Our approach is to show that $m(z)$ is the solution to a perturbed implicit equation;  
in particular, we use several resolvent identities to obtain the identity~\eqref{eq:implicitfinite} below.  
We then determine the expectation of the random term in this equation and show that it is also highly concentrated. 
The concentration result, Lemma~\ref{lemma:concentration}, is presented in Section~\ref{section:concentration} and is the most important part of the proof. 
Given this concentration we are able to isolate the perturbation and obtain the implicit equation~\eqref{eq:implicit}. 
We then show that this equation is stable, so that $m(z)$ is close to the solution to the unperturbed equation, which is $m_M(z)$.

\section{The Concentration Result}\label{section:concentration}
We begin with Theorem~\ref{theorem:main}. 
For two pairs of coordinate projections $P$ and $Q$ (also called ``diagonal projections'') 
and unitary matrices $W_1,W_2\in \Ub(n^k)$, we may write
$\Pi_1=W_1 PW_1^*$ and $\Pi_2=W_2QW_2^*$.
We then set 
\begin{equation*}
\U=U^{\ok}\;\;\text{and}\;\; \W= W_1^*U^{\ok}W_2^*
\end{equation*}
so that
\begin{equation*}
\Pi_1 U^{\ok} \Pi_2 U^{\ok *} \Pi_1= W_1P\W Q\Wstar P W_1^*,
\end{equation*}
which has the same eigenvalues as $P\W Q\Wstar P$.

We use $R(z)$ to denote the resolvent of our matrix of interest:
\begin{equation*}
R(z):=(P\W Q\Wstar P-zI)^{-1}.
\end{equation*}
For $n\in\mathbb{N}$ define
\begin{equation*}
\nn:=\{1,\ldots,n\}.  
\end{equation*}
The large matrix that we address  has dimensions $n^k\times n^k$, and we will use $\nn^k$ as our index set. 
The matrix $\U$ is indexed so that 
\begin{equation}\label{eq:index}
\U_{i,j}=U_{i_1,j_1}\cdots U_{i_k,j_k}.
\end{equation} 
We define 
\begin{equation*}\begin{array}{ll}
u_j&:=\;j^{th}\;\text{ column of }\U\\
\wj&:=\;j^{th}\;\text{ column of }W_1^*U^{\ok}W_2^*
\end{array}\end{equation*}
and 
\begin{equation*}
\Rj(z):=\left(  \sum_{k\neq j}Q_{k,k} P w_k w_k^* P-zI\right)^{-1}.
\end{equation*}

Using the definition of $\eta$ from~\eqref{equation:defeta} and fixed constants $\kappa,\cb>0$,
we define the region
\begin{equation}
\Omega:=\big\{\region\big\}.\label{equation:region}
\end{equation}
The constant $c_b$ will be chosen small enough to satisfy requirements for Lemma~\ref{lemma:startingpoint}

\begin{lemma}\label{lemma:concentration}
Assume all the hypotheses of Theorem~\ref{theorem:main}.
For any $s>0$, assume
\begin{equation*}
\eta\geq \frac{\sqrt{\rho}\log^{\frac{s}{2}+4}n}{n^\beta}.
\end{equation*}
Then for all $z\in\Omega$
\begin{equation}\label{equation:maxdelta}
\P\left( \max_{j\in\nn^k} |\wjs P\Rj(z) P \wj-\E \wjs P\Rj(z) P\wj  |>\frac{1}{n^{\alpha}}\right) \leq 2n^{k}e^{-\log^s n}.
\end{equation}
\end{lemma}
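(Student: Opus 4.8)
The plan is to establish the concentration of the quadratic form $\wjs P \Rj(z) P \wj$ around its mean by exploiting the special structure of $\wj$ as a column of a Kronecker product $W_1^* U^{\otimes k} W_2^*$. The first step is to fix $j$ and $z \in \Omega$ and peel off the dependence of $\Rj(z)$ on the single column $\wj$ — note $\Rj$ was defined precisely to exclude the index $j$, so $\Rj(z)$ depends on $U$ only through the columns $\{w_k\}_{k \neq j}$, while $\wj$ is $W_1^* (U^{\otimes k}) W_2^* e_j$; the entries of $\wj$ are (up to the fixed unitaries $W_1, W_2$) degree-$k$ monomials in the entries of the single $n\times n$ Haar matrix $U$, as recorded in~\eqref{eq:index}. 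The key point is that both $\wj$ and all of $\Rj(z)$ are functions of the \emph{same} underlying $n^2$-parameter object $U$, so I cannot treat them as independent; instead I would view $\wjs P \Rj(z) P \wj$ as a single function $f(U)$ on $\Ub(n)$ and apply a concentration-of-measure inequality on the unitary group.

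Concretely, the second step is to bound the Lipschitz constant of $U \mapsto \wjs P \Rj(z) P \wj$ with respect to the Hilbert–Schmidt (or operator) metric on $\Ub(n)$, and then invoke the concentration inequality for Lipschitz functions on $\Ub(n)$ (the Gromov–Milman / Meckes-type bound, which gives Gaussian tails with variance proxy of order $1/n$, \emph{not} $1/n^k$). The resolvent $\Rj(z)$ has operator norm at most $1/\eta$ since $z$ has imaginary part at least $\eta$, and $\|P w_k\| \le \|w_k\| = 1$; the derivative of $\Rj(z)$ in $U$ picks up one factor of $1/\eta^2$ and $k$ terms from differentiating the degree-$k$ monomials $U_{i_1,j_1}\cdots U_{i_k,j_k}$, while differentiating the outer $\wj$ factors contributes a further $1/\eta \cdot k$. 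So the Lipschitz constant is roughly $k \cdot \mathrm{poly}(1/\eta)$, and the concentration inequality yields a tail of the form $\exp(-c n t^2 / (k^2 \eta^{-c'}))$. Plugging in the definition~\eqref{equation:defeta} of $\eta$, with $\eta^{-1}$ of size $n^\beta/\mathrm{polylog}$, and using $k \le \cz \log n$ together with $\alpha + 2\beta = \tfrac12 - \cz$, one checks the exponent at $t = n^{-\alpha}$ becomes at least $\log^s n$ (the polylogarithmic prefactors in $\eta$, with the specific power $\tfrac{s}{2}+4$, and the room in $\rho$ are exactly what is needed to absorb the $k^2 \le \log^2 n$ loss and the logarithmic factors from $\eta^{-c'}$). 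This gives the bound $e^{-\log^s n}$ for a fixed $j$; the final step is a union bound over the $n^k$ choices of $j \in \nn^k$, producing the factor $n^k$ and hence~\eqref{equation:maxdelta}.

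The main obstacle I anticipate is controlling the Lipschitz constant sharply enough — in particular, making sure the power of $1/\eta$ that appears is small (ideally just $1/\eta^2$ or so) rather than something like $1/\eta^k$, since $\eta$ is already polynomially small in $n$ and any power $k$ of it would be catastrophic given $k$ can grow. This forces a careful argument: one must differentiate $U \mapsto \Rj(z)$ and $U \mapsto \wj$ directly rather than expanding $\Rj$ as a Neumann series, and one must use that $\Rj$ is a resolvent so each derivative costs only one extra $1/\eta$. The combinatorial fact that each entry of $\U$ is a product of exactly $k$ entries of $U$ means the derivative of $\U$ in a given direction is a sum of $k$ Kronecker-type terms each of operator norm $O(1)$ — this is where the factor $k$ (and ultimately $k^2$ in the exponent) enters, and it is essential that the Haar measure lives on $\Ub(n)$ (dimension $\sim n^2$) rather than $\Ub(n^k)$, so that the concentration rate $1/n$ is strong enough to beat the $n^k$ union bound once $k \le \cz \log n$ with $\cz < \tfrac12$. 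A secondary technical point is that the inequality must hold uniformly over $z \in \Omega$, but since $\Omega$ has polynomially bounded diameter and $m(\cdot)$, $R_j(\cdot)$ are Lipschitz in $z$ with constant $1/\eta^2$, one can discretize $\Omega$ on a fine polynomial grid and absorb the discretization error, at the cost of only another polynomial (hence negligible) factor in the union bound.
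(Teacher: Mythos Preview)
Your plan is sound and would prove the lemma, but it takes a different route from the paper. Both arguments view $f(U)=\wjs P\Rj(z)P\wj$ as a function on $\Ub(n)$ and appeal to concentration of Haar measure there; the difference is in which concentration tool is invoked.

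The paper does \emph{not} use the Gromov--Milman Lipschitz inequality. Instead it applies Chatterjee's exchangeable-pair inequality (Proposition~2.5 of \cite{Cha07a}): one couples $U$ with $U'=U(I-\gamma vv^*)$ for $v$ uniform on $S^{n-1}$ and $\gamma=1-e^{2\pi i\phi}$, and bounds $(\E|f(U)-f(U')|^2)^{1/2}$ together with $\|f\|_\infty\le 2/\eta$. The work goes into writing $\W-\W'=\sum_{l=1}^k T_l$ with $T_l=W_1^*\bigl[U^{\otimes(k-l)}\otimes \gamma Uvv^*\otimes(U(I-\gamma vv^*))^{\otimes(l-1)}\bigr]W_2$, and then showing that for fixed $x,y$ one has $|\langle x,T_l y\rangle|\le |\gamma|(1+|\gamma|)^{l-1}\frac{\log^{2r}n}{n}\|x\|\|y\|$ with high probability over $v$. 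This feeds into the resolvent identity and gives $(\E|f(U)-f(U')|^2)^{1/2}\le \log^{2r+1}n/(\eta^2 n^{1-2c_0})$, the factor $n^{-2c_0}$ coming from $(1+|\gamma|)^k\le 4^k\le n^{2c_0}$.

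Your Lipschitz approach is arguably cleaner here: differentiating $U^{\otimes k}$ at a unitary $U$ in direction $H$ gives $\sum_l U^{\otimes(l-1)}\otimes H\otimes U^{\otimes(k-l)}$, whose operator norm is $\le k\|H\|_{op}\le k\|H\|_{HS}$, so the HS-Lipschitz constant of $f$ is $O(k/\eta^2)$ with only a \emph{linear} factor in $k$, avoiding the exponential $(1+|\gamma|)^k$ loss and hence the $n^{2c_0}$ penalty. The resulting tail $\exp(-cn\eta^4 t^2/k^2)$ is then comfortably stronger than what is needed. The trade-off is that Chatterjee's method requires only an $L^2$ increment bound rather than a pointwise Lipschitz bound, which can matter for rougher functionals; for this smooth resolvent functional either tool suffices. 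Two minor remarks: the lemma is stated for each fixed $z\in\Omega$, so your discretization-in-$z$ step is unnecessary; and since $f$ is complex-valued you should apply the concentration inequality to $\Re f$ and $\Im f$ separately.
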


\begin{proof}
Since the columns of $U$ have the same distribution, $\ujs\Wo P\Rj(z) P\Wos\uj$ has the same distribution for all $j$.

For an arbitrary $j$ set
\begin{equation}
f(U):=\wjs P\Rj(z)P\wj- \E \wjs P\Rj(z) P\wj .
\end{equation}
We will use a concentration result on $\Ub(n)$ due to Chatterjee to show that $f(U)$ is concentrated around $0$.
Let $v$ be uniformly distributed on $S^{n-1}$, let $\phi$ be uniformly distributed on $[0,1]$, set $\gamma:=1-e^{2\pi i \phi}$ and 
set 
\begin{equation}
U':=U(I-\gamma \vvs)\;\;\;\text{  and  }\;\;\;\W':=W_1^*(U')^{\otimes k}W_2^*.\label{eq:Uprime}
\end{equation}
First we bound $(\E|f(U)-f(U')|^2)^\oh$ by showing that $|f(U)-f(U')|$ is small with high probability.
We set $T:=\W-\W'$  and define the resolvent $\Rj'(z)$ analogously to $\Rj(z)$.
The $j^{th}$ column of $\W'$ is denoted $w'_j$.  
Finally $\Qj$ denotes the matrix $Q$ with the entry $(j,j)$ set to $0$.
Thus
\begin{equation*}
\Rj(z)-\Rj'(z)=\Rj(z)[PT\Qj\Wstar P+P\W \Qj T^*P -PT\Qj T^*P ]\Rj'(z).
\end{equation*}
We will provide bounds for the following terms
\begin{small}\begin{eqnarray}
\lefteqn{|\wjs P\Rj(z)P\wj-(w'_j)^* P\Rj'(z)P \wj'|} \label{equation:termtobound}\\
&\leq& |\wjs P(\Rj(z)-\Rj'(z))P\wj|+2|(\wj-\wj')^* R(z)\wj|.\nonumber\\
&\leq&  2|\wjs P\Rj(z)PT\Qj\Wstar P\Rj'(z)P \wj|+|\wjs P\Rj(z)PT\Qj T^* P\Rj'(z)P \wj| \nonumber\\
&&+2|(\wj-\wj')^* R(z)\wj|.\label{equation:twotermsb}
\end{eqnarray}\end{small}
For $l=1,\ldots,k$, set $T_l:=W_1^*[U^{\otimes k-l}\otimes \gamma U\vvs\otimes (U(I-\gamma \vvs))^{\otimes l-1}]W_2,$ so that
\begin{eqnarray*}
T=W_1^*[U^{\otimes k}-(U(I-\gamma \vvs))^{\otimes k}]W_2=\sum_{l=1}^kT_l
\end{eqnarray*}
with the convention that $A^{\otimes 0}$ is the scalar $1$ for any matrix $A$.

For arbitrary $x,y$ and an arbitrary fixed $l$
we show that $|\langle x ,T_l y\rangle|$ is small with high probability.
We  set
\begin{equation}\label{defM}
M:=W_1U^{\otimes (k-l)}\otimes (\Uma)^{\otimes l-1}W_2
\end{equation}
and define $a,b\in \C^{\nn^{k-1}}$ by
\begin{equation}\label{eq:defa}
a_i=\sum_{t=1}^n [Uv]_t [W_1 x]_{(i_1,\ldots,t,\ldots,i_{k-1})} \;\;\;\;\textnormal{and}\;\;\;\;b_i=\sum_{t=1}^n v_t \overline{[W_2y]}_{(i_1,\ldots,t,\ldots,i_{k-1})},
\end{equation}
where in both terms $t$ is the $l^{th}$ index, so that 
\begin{equation*}
|\langle x,T_l y\rangle| = |\gamma| |\langle a,Mb\rangle  |\leq |\gamma|  \|M\|\|a\|_2\|b\|_2. \label{equation:Mw}
\end{equation*}
For $M$ we have the bound $\|M\|\leq (1+|\gamma|)^{(l-1)}$.
Since $v$ is uniformly distributed on $S^{n-1}$, for each $i$, and all $r>0$,
\begin{equation*}
\P\left(|a_{i}|>\frac{\log^r n}{\sqrt{n}}\left(\sum_{t=1}^n  |x_{(i_1,\ldots,t,\ldots,i_{k-1})}|^2\right)^\oh\right) \leq e^{-\oh \log^{2r}n},
\end{equation*}
and the analogous bound holds for each $b_i$; see, for example, Lemma B.1 in~\cite{SST06}. 
So with probability at least $1-2n^{k-1}e^{-\oh \log^{2r}n}$,
\begin{eqnarray*}
\|a\|_2\|b\|_2 &\leq& \frac{\log^{2r} n}{n} \left( \sum_{i\in\nn^{k-1}} \sum_{t=1}^n  |x_{(i_1,\ldots,t,\ldots,i_{k-1})}|^2   \right)^{1/2} \left( \sum_{i\in\nn^{k-1}} \sum_{t=1}^n  |y_{(i_1,\ldots,t,\ldots,i_{k-1})}|^2\right)^{1/2}\\
&=& \frac{\log^{2r} n}{n} \|x\|_2\|y\|_2.
\end{eqnarray*}
We now have a probabilistic bound on $|\langle x ,T_l y\rangle|$ for arbitrary $x$ and $y$
and $l=1,\ldots,k$.
Thus, with probability at
least $1-2kn^{k-1}e^{-\oh \log^{2r}n}$,
\begin{equation*}
|\langle x,T y\rangle|\leq k(1+|\gamma|)^{(k-1)}\frac{\log^{2r} n}{n}\|x\|_2\|y\|_2
\end{equation*}
for any fixed $x$ and $y$.
Since $\|\uj\|_2=1$ and $\|R(z)\|,\|R'(z)\|<\eta^{-1}$, we have that the first term in~\eqref{equation:twotermsb}
is bounded by
\begin{equation*}
2k|\gamma|(1+|\gamma|)^{k}\eta^{-2}\frac{\log^{2r} n}{n}
\end{equation*}
with probability at least $1-2kn^{k-1}e^{-\oh \log^{2r}n}$.
The Cauchy-Schwartz inequality gives that the second term in~\eqref{equation:twotermsb}
satisfies the same bound with the same probability.
We use a similar calculation for the third term in~\eqref{equation:twotermsb}
to obtain
\begin{equation}\label{eq:singleR}
|(\wj-\wj')^*R(z)\Wos\wj|\leq k \frac{\log^{2r}n}{n}\|M\| \|R(z)\|\leq  k\eta^{-1} (1+|\gamma|)^{(k-1)}\frac{\log^{2r}n}{n}
\end{equation}
with probability at least $1-kn^{k-1}e^{-\oh \log^{2r}n}$.
Thus, using the worst-case bound of $2\eta^{-1}$ on the event with small probability,
the bound $|\gamma|<2$ and the assumption $r = 2$, for large $n$ we obtain
\begin{eqnarray}
\E |\eqref{equation:termtobound}|^2&
\leq& \left(8|\gamma|k(1+|\gamma|)^{k}\eta^{-2}\frac{\log^{2r} n}{n}+2k\eta^{-1}(1+|\gamma|)^{k}\frac{\log^{2r} n}{n}\right)^2+10(2\eta^{-1})^2kn^{k}e^{-\oh \log^{2r}n}\nonumber\\
&\leq &\left( 4^{k} \frac{\log^{2r+1} n}{\eta^2 n}\right)^2\nonumber\\
&\leq &\left( \frac{ \log^{2r+1}n}{\eta^2 n^{1-2\cz}}\right)^2\nonumber
\end{eqnarray}
or
\begin{equation}\label{equation:B}
(\E(|f(U)-f(U')|^2)^{1/2}\leq \frac{ \log^{2r+1}n}{\eta^2 n^{1-2\cz}}.
\end{equation}
We also have the uniform bound
\begin{equation}
\|f\|_{\infty}\leq \frac{2}{\eta} \label{equation:A}
\end{equation}
for all $z\in\C^+$.

We now use Proposition 2.5 of~\cite{Cha07a}.
We use $K$ to denote the constant necessary to apply Chatterjee's result.
Using the bounds~\eqref{equation:B} and~\eqref{equation:A}, for large $n$ we have
\begin{equation*}
K\leq \frac{\rho\log^{4r+2}n}{\eta^4n^{1-2\cz}}
\end{equation*}
for an absolute constant $\rho>0$.
Now, for all $t>0$
\begin{equation*}
\P(|f(U)|>t)\leq 2\exp\left\{ \frac{-t^2 \eta^{-2}n^{1-2\cz}}{\rho \log^{2(r+2)}n} \right\},
\end{equation*}
so that, setting $r=2$,  and recalling the definition of $\eta$,
\begin{equation*}
\P\left(|f(U)|>\frac{1}{n^{\alpha}}\right)\leq 2 e^{-\log^{s} n}.
\end{equation*}
We obtain~\eqref{equation:maxdelta} by taking the union bound.
\end{proof}

\section{The Starting Point}

Recall that the first statement of Theorem~\ref{theorem:main} is that $|m_M(z)-m(z)|$ is small with high probability for $z$ having small imaginary part. 
Yet, when $z$ has small imaginary part, we do not have a good bound on $\|R(z)\|$. 
We, therefore, begin our argument with $\Im z=\O(1)$, and show that $|m_M(z)-m(z)|$ is small in this region. 
Then, in Section~\ref{section:cont} we use a continuity argument to incrementally decrease $\Im z$ to $\eta$ and conclude the proofs of the main theorems.

\begin{lemma}\label{lemma:constants}
There exist constants $0<c_M\leq C_M<\infty$ such that
\begin{equation}
|m_M(z)|\leq C_M
\end{equation}
and
\begin{equation}
\Im m_M(z)\geq c_M \sqrt{\kappa}\label{equation:lowerboundImm}
\end{equation}
for all $z\in \Omega$.
\end{lemma}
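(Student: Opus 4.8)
The plan is to play two representations of $m_M$ against each other: the closed form for the upper bound $|m_M(z)|\le C_M$, and the Stieltjes (Poisson) integral representation for the lower bound $\Im m_M(z)\ge c_M\sqrt{\kappa}$. The useful preliminary observation is that the radicand in the formula for $m_M$ factors: $z^2-2(p+q-2pq)z+(p-q)^2=(z-\lm)(z-\lp)$, since its roots are $(p+q-2pq)\pm\sqrt{(p+q-2pq)^2-(p-q)^2}$ and $(p+q-2pq)^2-(p-q)^2=[2q(1-p)][2p(1-q)]=4pq(1-p)(1-q)$. Thus on $\Omega$ we have $m_M(z)=\big(z+p+q-2+\sqrt{(z-\lm)(z-\lp)}\,\big)/\big(2z(1-z)\big)$. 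For the upper bound I would estimate numerator and denominator separately: since $\Re z\le\lp$ and $\Im z\le\cb$, one has $|z|\le(\lp^2+\cb^2)^{1/2}$, so the numerator is at most $|z|+2+(|z-\lm|\,|z-\lp|)^{1/2}\le 2(\lp^2+\cb^2)^{1/2}+3$, a constant depending only on $p,q,\cb$; and for the denominator $|2z(1-z)|=2|z|\,|1-z|\ge 2(\Re z)(1-\Re z)\ge 2(\lm+\kappa)(1-\lp+\kappa)\ge 2\lm(1-\lp)>0$, using $\Re z\in\interval$ and (in the generic case) $0<\lm<\lp<1$. Dividing gives $|m_M(z)|\le C_M$.

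For the lower bound I would use that $m_M$ is the Stieltjes transform of $f_M$, so that with $z=E+i\eta'$, $E\in\interval$, $\eta'=\Im z\in(\eta,\cb]$,
\[
\Im m_M(z)=\int\frac{\eta'}{(x-E)^2+\eta'^2}\,dF_M(x)\ \ge\ \int_{\lm}^{\lp}\frac{\eta'}{(x-E)^2+\eta'^2}\,f_M(x)\,dx,
\]
the atoms of $F_M$ contributing nonnegatively. On the inner interval $J:=[\lm+\tfrac{\kappa}{2},\lp-\tfrac{\kappa}{2}]$ one has $(\lp-x)(x-\lm)\ge\tfrac{\kappa}{2}\big(\lp-\lm-\tfrac{\kappa}{2}\big)\ge c_1\kappa$ and $x(1-x)\le\tfrac14$, hence $f_M(x)\ge c_2\sqrt{\kappa}$ on $J$, with $c_1,c_2>0$ depending only on $p,q$ (for $\kappa$ small). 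The key point is that although $E$ may lie within $\kappa$ of an endpoint of $[\lm,\lp]$, the interval $J$ still contains $[E,E+c_3]$ or $[E-c_3,E]$ with $c_3:=(\lp-\lm)/4$ fixed — namely on whichever side of $E$ points toward the center of $[\lm,\lp]$. Therefore
\[
\Im m_M(z)\ \ge\ c_2\sqrt{\kappa}\int_0^{c_3}\frac{\eta'}{t^2+\eta'^2}\,dt\ =\ c_2\sqrt{\kappa}\,\arctan\frac{c_3}{\eta'}\ \ge\ c_2\arctan\frac{c_3}{\cb}\cdot\sqrt{\kappa},
\]
using $\eta'\le\cb$ and monotonicity of $\arctan$; setting $c_M:=\min\{c_2\arctan(c_3/\cb),\,C_M\}>0$ finishes the proof.

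The one genuinely delicate point is this last estimate. The Poisson kernel $\eta'/\big((x-E)^2+\eta'^2\big)$ has total mass $\pi$ but concentrates at scale $\eta'$, and the largest interval around $E$ guaranteed to stay inside $[\lm,\lp]$ uniformly over $E\in\interval$ has length only $\sim\kappa$; restricting the integral there captures only an $\arctan(\kappa/\cb)\sim\kappa$ fraction of the kernel and yields the too-weak bound $\Im m_M(z)\gtrsim\kappa^{3/2}$. The remedy above is to integrate instead over a subinterval of the bulk whose length is bounded below independently of $\kappa$ — which exists precisely because even when $E$ is $\kappa$-close to one edge the bulk extends an order-one distance past $E$ toward the other edge — and on that interval $f_M$ is still of order $\sqrt{\kappa}$. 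Everything else is elementary manipulation of the explicit formulas for $m_M$ and $f_M$ (with the boundary cases $p=q$ or $p+q=1$, where $\lm=0$ or $\lp=1$, handled the same way after replacing the corresponding factor in the denominator bound by $\kappa$).
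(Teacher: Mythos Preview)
Your argument is correct and follows the same route as the paper's: boundedness of $m_M$ for the upper bound, and Stieltjes inversion together with the square-root behavior of $f_M$ at $\lambda_\pm$ for the lower bound. The paper's proof is a two-sentence sketch—it attributes the upper bound simply to analyticity of $m_M$ on a bounded region, and the lower bound to the inversion formula plus the edge behavior of $f_M$, without further detail. You take a slightly different path for the upper bound (explicit estimation of numerator and denominator in the closed formula rather than a compactness argument), and for the lower bound you supply exactly the computation the paper omits: in particular your observation that one must integrate the Poisson kernel over an order-one subinterval of the bulk, not merely the $\kappa$-neighborhood of $E$, in order to obtain the $\sqrt{\kappa}$ bound uniformly in $\Im z\le c_b$ is the real content of the estimate, and the paper does not spell it out.
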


\begin{proof}
The first inequality holds because $m_M$ is analytic and $\Omega$ is a bounded region.
The second inequality holds again because $m_M$ is analytic and, by the Stieltjes inversion formula,
\begin{equation*}
f_M(x)=\frac{1}{\pi}\lim_{\omega\rightarrow 0^+}\Im m_M(x+i\omega),
\end{equation*}
and $f_M$ has square root singularities only at $\pm\lambda$.
\end{proof}

\begin{lemma}\label{lemma:startingpoint}
Assume the hypotheses of Theorem~\ref{theorem:main}.
For fixed $E\in\interval$, with probability at least
\begin{equation}
1-4n^{k}e^{-\log^s n}
\end{equation}
we have
\begin{equation*}
|m_M(E+i\cb)-m(E+i\cb)|=\O\left(\frac{1}{n^\alpha\kappa^{2}}\right).
\end{equation*}
\end{lemma}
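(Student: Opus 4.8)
The plan is to derive, for fixed $E$ and $z = E + i\cb$, an approximate self-consistent (implicit) equation for $m(z)$ from resolvent identities, and then show that this equation forces $m(z)$ to be close to $m_M(z)$. First I would use the decomposition $\Pi_1 U^{\ok}\Pi_2 U^{\ok*}\Pi_1$ has the same eigenvalues as $P\W Q\Wstar P$, so $m(z) = \oN[N] \tr(P\W Q\Wstar P - zI)^{-1}$ with $N = n^k$, and write $P\W Q\Wstar P = \sum_{j\in\nn^k} \Qjj P\wj\wjs P$. A standard rank-one/Schur-complement manipulation expands $\tr R(z) = \sum_j \frac{-1}{z(1 + \wjs P\Rj(z)P\wj)} \cdot$ (something of the form $\Qjj\cdot(\cdots)$ plus a contribution from coordinates with $\Qjj = 0$), using the identity $R(z) = \Rj(z) - \Rj(z)\Qjj P\wj\wjs P R(z)$ together with $\wjs P R(z)P\wj = \frac{\wjs P\Rj(z)P\wj}{1 + \Qjj\wjs P\Rj(z)P\wj}$. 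This produces $m(z)$ as an average over $j$ of a smooth function of the scalars $\delta_j(z) := \wjs P\Rj(z)P\wj$.

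Next I would invoke Lemma~\ref{lemma:concentration}: on $\Omega$, and in particular at $z = E + i\cb$ (choosing $\cb$ small enough that $z\in\Omega$, which is exactly the constraint the excerpt flags), each $\delta_j(z)$ is within $n^{-\alpha}$ of its expectation $\E\delta_j(z)$ with probability at least $1 - 2n^k e^{-\log^s n}$, and by symmetry of the columns of $U$ this expectation is independent of $j$; call it $\bar\delta(z)$. Moreover $\E\delta_j(z)$ should itself be expressible in terms of $\E\oN[N]\tr(P\Rj(z)P)$, which differs from $m(z)$ by $O(1/(N\eta))$ (interlacing / rank-one perturbation bound on traces of resolvents) — and since $\Im z = \cb = \O(1)$ here, that error is negligible. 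Substituting the concentrated value of $\delta_j$ into the trace expansion collapses the average over $j$ and yields, with the stated probability, a perturbed scalar fixed-point equation of the form
\begin{equation*}
m(z) = G\big(z, m(z)\big) + \O\left(\frac{1}{n^\alpha}\right) + \O\left(\frac{1}{\eta}\right)\text{-type lower-order terms},
\end{equation*}
where $G(z,\cdot)$ is precisely the function whose exact fixed point is $m_M(z)$ — i.e. $m_M(z)$ satisfies $m_M(z) = G(z, m_M(z))$, this being the known functional equation for the MANOVA/free-multiplicative-convolution density with Stieltjes transform $m_M$ given in the excerpt. The bookkeeping of the two types of coordinates ($\Qjj = 1$ versus $\Qjj = 0$), and tracking the $pn^k$, $qn^k$ rank constraints through the averages, is where one recovers the exact form of $G$ and the parameters $p, q$.

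Finally I would run a stability/continuity argument for the scalar equation. The map $w \mapsto G(z,w)$ has derivative strictly less than $1$ in modulus on the relevant domain for $z$ with $\Im z$ bounded below (uniformly over $E\in\interval$ and $\Im z \in [\eta,\cb]$, using Lemma~\ref{lemma:constants}: $\Im m_M(z) \geq c_M\sqrt{\kappa}$ keeps us away from the singular locus), so the perturbed fixed point $m(z)$ is within $C/(1 - \|G'\|)$ times the perturbation size of the true fixed point $m_M(z)$; the $\kappa$-dependence of the spectral gap is what produces the $\kappa^{-2}$ in the error $\O(1/(n^\alpha\kappa^2))$. The main obstacle I expect is twofold: (i) carefully verifying that $\cb$ can be chosen so that $E + i\cb \in \Omega$ while $\cb$ is still $\O(1)$ — so that $\|R(z)\|$ and $\Im m_M$ are both controlled and the $1/\eta$-type errors from Lemma~\ref{lemma:concentration} and from trace-perturbation bounds are harmless at this starting altitude — and (ii) the quantitative stability estimate, i.e. lower-bounding $1 - |G'(z, w)|$ uniformly by something like $c\kappa$ on the region, since this is what controls the error and must be done via the explicit formula for $m_M$ and the square-root behavior of $f_M$ at $\lambda_\pm$. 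The probability bound $4n^k e^{-\log^s n}$ is then just the union of the concentration failure event (from Lemma~\ref{lemma:concentration}, contributing $2n^k e^{-\log^s n}$) with a second event of the same size accounting for a parallel concentration step needed to pin down $\E\delta_j$ or the trace $\oN[N]\tr(P R(z)P)$ itself.
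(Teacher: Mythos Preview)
Your overall architecture is right, but there is a genuine gap at the step where you try to identify the common expectation $\bar\delta(z)=\E w_j^* P R_j(z) P w_j$. You write that this expectation ``should itself be expressible in terms of $\E\frac{1}{N}\tr(PR_j(z)P)$'', appealing implicitly to the usual quadratic-form argument. That argument, however, requires $w_j$ to be independent of $R_j$ (as in the Wigner or sample-covariance models). Here every column $w_j$ of $\W=W_1^*U^{\otimes k}W_2^*$ is a function of the \emph{same} $n\times n$ matrix $U$, so $w_j$ and $R_j$ are strongly dependent; there is no direct route from $\E w_j^* P R_j P w_j$ to $\frac{1}{N}\tr(PR_jP)$. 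The paper sidesteps this entirely: it never computes $D(z):=\E w_j^* P R_j P w_j$ directly. Instead it multiplies the unknown $D(z)$ by the resolvent identity for $m(z)$, replaces each $D(z)$ in the numerator by $w_j^* P R_j P w_j-\delta_j(z)$, and collapses the sum to $\frac{-1}{z}\frac{1}{n^k}\tr \Wstar P R(z) P \W=\frac{-1}{z}\bigl(m(z)+\tfrac{1-p}{z}\bigr)$ up to an $\O(\delta(z))$ error. Dividing by $m(z)$ then gives $D(z)=\frac{-1}{z}\bigl(1+\tfrac{1-p}{zm(z)}\bigr)+\O(\delta(z))$, and substituting this back into the expression for $m(z)$ produces the self-consistent equation. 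This algebraic trick is the missing idea in your proposal.

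Two smaller points. First, your claim that ``by symmetry of the columns of $U$ this expectation is independent of $j$'' is only true for multi-indices $j\in\nnku$ with all $k$ coordinates distinct; for repeated coordinates (e.g.\ $j=(1,1,3,\dots)$) the column $u_j$ has a different law. The paper handles this by noting $|\nn^k\setminus\nnku|/n^k\le n^{-1/2}$ and absorbing those terms into the $\O(\delta(z))$ error. Second, for the stability step the paper does not use a contraction estimate on $G'$: it observes that the self-consistent equation is quadratic in $m$, writes the two roots $m_\Lambda(z)$ explicitly, and bounds $|m_M(z)-m(z)|$ via the elementary inequality $|\sqrt{a}-\sqrt{a+b}|\le C|b|/\sqrt{|a|+|b|}$, which is where the $\kappa^{-2}$ appears (through the square-root behavior of the discriminant near $\lambda_\pm$). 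Your contraction approach could be made to work, but the explicit-root argument is shorter here.
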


\begin{proof}
Recall the indexing for $\U$ given in~\eqref{eq:index} and 
let $\nnku$ denote the subset of $\nn^k$ consisting of $k$-tuples of $k$ unique integers. 
Note that $\uj$ is identically distributed for all $j\in\nnku$, but $u_{(1,1,3,\ldots,k)}$, for example, has a slightly different distribution and will 
have to be treated separately. 
Since $\uj$ is identically distributed for $j\in\nnku$, it follows that 
$\wj$, $\sum_{k\neq j}Q_{k,k} P w_k w_k^* P$, $\Rj(z)$, and hence
\begin{equation*}
\wjs P\Rj(z)P \wj,
\end{equation*}
are also identically distributed for all $j\in\nnku$. 

We now define several more quantities:
\begin{equation*}
\delta_j(z):=\wjs P\Rj(z)P \wj-\E\wjs  P\Rj(z)P\wj
\end{equation*}
and
\begin{equation*}
D(z):=\E w_{(1,\ldots,k)}^* PR_{(1,\ldots,k)}(z)P w_{(1,\ldots,k)}
\;\;\;\textnormal{and}\;\;\;\delta(z):=\max|\delta_j(z)|,
\end{equation*}
where in the definition of $D(z)$ we have chosen ${(1,\ldots,k)}$ as an arbitrary element in $\nnku$. 
Note that a  bound on $|\delta(z)|$ was obtained in Lemma~\ref{lemma:concentration}; 
the proof  will conclude by applying that bound. 

For the proof of Lemma~\ref{lemma:startingpoint}, 
we set $z:=E+i\cb$ so that we have the simple bound $\|R(z)\|\leq 1/\cb$. 
In the following we  use that if $A$ is an $n\times n$ matrix, $q\in \C^n$ and both 
$A$ and $A+qq^*$ are invertible, then 
\begin{equation}
q^*(A+qq^*)^{-1}=\frac{1}{1+q^* A^{-1}q}q^*A^{-1}, 
\end{equation}
which one may verify directly. 
By writing
\begin{equation*}
P  \W  Q \Wstar P =\sumnn \Qjj P  \wjs\wj P,
\end{equation*}
the same calculations as equations $(21)$ through $(23)$ of~\cite{EF13} yield
\begin{equation}\label{eq:fordenom}
\wjs P R(z)P\wj= \frac{1}{1+\Qjj \wjs  PR_j(z)P\wj}\wjs PR_j(z)P\wj
\end{equation}
and
\begin{equation}\label{eq:implicitfinite}
 m(z)=\frac{-1}{z}\frac{1}{n^k}\sum_{j=1}^{n^k} \frac{1}{1+\Qjj \wjs PR_j(z)P\wj}.
\end{equation}
Using~\eqref{eq:fordenom}, we have that  $|1+\Qjj \wjs PR_j(z)P\wj|=\O(1)$ for all $j$.
Since $\Im m(z)=\O(1)$, we also have $|m(z)|=\O(1)$.
Thus, there exists a constant $B(z)$ satisfying $1\geq B(z)=\O(1)$ such that
\begin{equation}
|1+\wjs  P R(z)P \wj|,|m(z)|\geq B(z). \label{equation:lowerB}
\end{equation}
Since 
\begin{equation*}
 \frac{|\nn^k\backslash \nnku|}{n^k}\leq n^{-1/2}
\end{equation*}
is much smaller than $\delta(z)$ and $|D(z),|\wjs PR_j(z)P\wj|=\O(1)$ for all $j$, in the following we  
absorb these terms into error terms in $\delta(z)$. 
We then obtain 
\begin{eqnarray*}
D(z)m(z)
&=& \frac{-1}{z}\onk \sumnn \frac{D(z)}{1+\Qjj \wjs PR_j(z)P\wj}\\
&=&\frac{-1}{z}\onk\sumu \frac{\wjs PR_j(z)P\wj}{1+\Qjj \wjs PR_j(z)P\wj}
+\frac{1}{z}\onk \sumu \frac{\delta_j(z)}{1+\Qjj \wjs PR_j(z)P\wj}\\
&&-\frac{1}{z}\onk \sumc \frac{\wjs PR_j(z)P\wj}{1+\Qjj \wjs PR_j(z)P\wj}-\frac{1}{z}\onk \sumc \frac{D(z)}{1+\Qjj \wjs PR_j(z)P\wj}\\
&&+\frac{1}{z}\onk \sumc \frac{\wjs PR_j(z)P\wj}{1+\Qjj \wjs PR_j(z)P\wj}\\
&=&\frac{-1}{z}\onk \sumnn \frac{\wjs PR_j(z)P\wj}{1+\Qjj \wjs  PR_j(z)P\wj}+\oz\frac{\O (\delta(z))}{B(z)}  \\
&=&\frac{-1}{z}\onk \sumnn \wjs  P R(z)P\wj+\oz\frac{\O (\delta(z))}{B(z)}  \\
&=&\frac{-1}{z} \tr  \Wstar  P R(z)P \W+\oz\frac{\O (\delta(z))}{B(z)}.
\end{eqnarray*}
Now, since $[R(z)]_{j,j}=-1/z$ when $\Pjj=0$,
\begin{eqnarray*}
\frac{1}{n^k}\tr  \Wstar  P R(z)P \W 
&=& \frac{1}{n^k}\tr  P R(z)P= \onk\sumnn 1(\{\Pjj=1\})[R(z)]_{j,j}\\
&=& \onk \sum_{j=1}^{n^k} [R(z)]_{j,j}-\onk\sumnn 1(\{\Pjj=0  \})[R(z)]_{j,j}=\frac{1}{n^k}\tr R(z)+\frac{(1-p)}{z}.
\end{eqnarray*}
Thus,
\begin{equation*}
D(z)m(z)= \frac{-1}{z}\left(m(z)+\frac{1-p}{z}\right)+\oz \frac{\O (\delta(z))}{B(z)},  
\end{equation*}
and, by~\eqref{equation:lowerB},
\begin{equation*}
D(z)=\moz \left(1+\frac{1-p}{zm(z)}\right)+\oz\frac{\O (\delta(z))}{B^2(z)} .
\end{equation*}
Next,
\begin{eqnarray}
m(z)&=&\frac{-1}{z}\frac{1}{N}\sumnn  \frac{1}{1+\Qjj \wjs PR_j(z)P\wj}\nonumber\\
&=&\onk\sumnn\frac{-1}{z+z\Qjj(D+\delta_j(z))}\nonumber\\
&=&\onk\sumnn\frac{-1}{z+z\Qjj\left(\moz\left(1+\frac{1-p}{zm(z)}\right)+\frac{\O (\delta(z))}{B^2(z)}+\O (\delta_j(z))\right)}\nonumber\\
&=&\onk\sumnn\frac{-1}{z+z\Qjj\left(\moz\left(1+\frac{1-p}{zm(z)}\right)+\frac{\O (\delta(z))}{B^2(z)}\right)}.\label{equation:withO}
\end{eqnarray}
We have
\begin{equation*}
0<\Im D(z)=\O(1),
\end{equation*}
so that if $\delta(z)=\o(1)$, then
\begin{equation}
\Im \moz \left(1+\frac{1-p}{zm(z)}\right)=\O(1),
\end{equation}
and hence we may choose $B(z)$ to also satisfy $|z||1-\oz(1+\frac{1-p}{zm(z)})|\geq B(z)$.
Then
\begin{eqnarray*}
\eqref{equation:withO}&=&\onk\sumnn\frac{-1}{z-\Qjj\left(1+\frac{1-p}{zm(z)}\right)}+\frac{\O (\delta(z))}{B^4(z)}\\
&=& \frac{-(1-q)}{z}-\frac{q}{z-\left(1+\frac{1-p}{zm(z)}\right)}+\frac{\O (\delta(z))}{B^4(z)}.
\end{eqnarray*}
The solutions to the equation
\begin{equation}\label{eq:implicit}
m(z)= \frac{-(1-q)}{z}-\frac{q}{z-\left(1+\frac{1-p}{zm(z)}\right)}+\Lambda
\end{equation}
are
\begin{small}\begin{eqnarray*}
m_\Lambda(z)=\frac{2-p-q-z+z(z-1)\Lambda}{2(z^2-z)} \hspace{4in}\\
\pm\frac{\sqrt{ (z-2+p+q)^2-4(z^2-z)+z(z-1)\Lambda^2-2((z-2+p+q)(z^2-z)-4(z^2-z)(1-p))\Lambda}}{2z(z-1)}
\end{eqnarray*}
\end{small}
and only the solution with addition is the Stieltjes transform of a measure, which can be seen by considering large values of $z$.
The inequality
\begin{equation*}
|\sqrt{a}-\sqrt{a+b}|\leq C\frac{|b|}{\sqrt{|a|+|b|}},
\end{equation*}
which holds for an absolute constant $C$ for all $a,b\in \C$,  gives
\begin{equation*}
|m_M(z)-m(z)|=\O\left(\frac{\delta(z)}{\kappa^{2}}\right).
\end{equation*}
The lemma now follows by applying Lemma~\ref{lemma:concentration}.
\end{proof}

\section{Continuity Argument and Proofs of the Theorems}\label{section:cont}

\begin{lemma}\label{lemma:continuity}
There exist constants $C,\ca,\cb>0$ such that if
\begin{equation}
|m(E+i\ez)-m_M(E+i\ez)|\leq \ca \sqrt{\kappa}\label{equation:condition}
\end{equation}
with probability $1-P(n)$, then with probability at least $1-P(n)-2n^ke^{-\log ^2 n}$,
\begin{equation*}
|m(E+i(\ez-n^{-2}))-m_M(E+i(\ez-n^{-2}))|\leq \frac{C}{n^{\alpha}\kappa^{2}},
\end{equation*}
provided $\cb\geq \ez,\ez-n^{-2}\geq \eta$.
\end{lemma}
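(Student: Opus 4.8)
The plan is to run a standard ``continuity in $\eta$'' or ``bootstrap'' argument of the type used in local semicircle-law proofs. The idea is that Lemma~\ref{lemma:startingpoint} gives us control of $|m-m_M|$ at the large imaginary part $\eta=\cb$, and Lemma~\ref{lemma:concentration} gives us control of the random error term $\delta(z)$ uniformly on the region $\Omega$, which includes all the points with imaginary part down to $\eta$. Lemma~\ref{lemma:continuity} is the inductive step that lets us descend from one height $\ez$ to the slightly smaller height $\ez - n^{-2}$; iterating it polynomially many times (about $\cb n^2$ times) then brings us down to $\eta$, and a union bound over these steps produces the final probability and the error bound claimed in Theorem~\ref{theorem:main}.

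Here is how I would carry out the step itself. First, fix $E$ and write $z_0 := E+i\ez$ and $z_1 := E + i(\ez - n^{-2})$. Since $\Im z_0, \Im z_1 \geq \eta$ and both lie in $\Omega$, Lemma~\ref{lemma:concentration} applies at both points (and indeed at $z_1$, which is where we want the conclusion): off an event of probability at most $2n^k e^{-\log^s n}$ we have $\delta(z_1) \le n^{-\alpha}$; choosing $s=2$ in the statement's error term accounts for the $2n^k e^{-\log^2 n}$. Next, I would re-derive the perturbed implicit equation~\eqref{eq:implicit} at $z_1$: the algebra in the proof of Lemma~\ref{lemma:startingpoint} going from~\eqref{eq:fordenom} up through~\eqref{eq:implicit} is valid at any $z\in\C^+$ as long as one has a lower bound $B(z)$ on the relevant denominators $|1+\wjs P R(z)P\wj|$, $|m(z)|$, and $|z||1-\oz(1+\tfrac{1-p}{zm(z)})|$. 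At small $\Im z$ these lower bounds are no longer automatic from $\Im z = \O(1)$, so this is where the continuity hypothesis~\eqref{equation:condition} enters: assuming $|m(z_0)-m_M(z_0)| \le \ca\sqrt{\kappa}$ and using $|m(z_1)-m(z_0)| \le n^{-2}\|R\|^2$-type Lipschitz bounds (valid since $\|R(z)\|\le \eta^{-1}$ is polynomially bounded and the step $n^{-2}$ is tiny), together with Lemma~\ref{lemma:constants}'s lower bound $\Im m_M \ge c_M\sqrt\kappa$, one gets that $m(z_1)$ is within $O(\sqrt\kappa)$ of $m_M(z_1)$, hence $\Im m(z_1) \gtrsim \sqrt\kappa$ and the needed lower bounds $B(z_1) \gtrsim \kappa$ hold. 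Feeding $\delta(z_1)\le n^{-\alpha}$ and $B(z_1)\gtrsim\kappa$ into the stability estimate at the end of Lemma~\ref{lemma:startingpoint}—the inequality $|\sqrt a - \sqrt{a+b}| \le C|b|/\sqrt{|a|+|b|}$ applied to the two branches of the quadratic—yields $|m(z_1)-m_M(z_1)| = \O(\delta(z_1)/\kappa^2) = \O(1/(n^\alpha\kappa^2))$, which is the claimed conclusion and (for $n$ large) is much smaller than the crude $\ca\sqrt\kappa$ that was the input, so the induction closes.

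The main obstacle is precisely the self-consistency of the lower bound $B(z_1)$: one must show that the weak a priori control inherited from height $\ez$ (only $O(\sqrt\kappa)$ closeness) survives the drop to $\ez-n^{-2}$ well enough to re-certify that the denominators do not vanish, before the sharp concentration bound can upgrade that weak control to the sharp $n^{-\alpha}\kappa^{-2}$ estimate. The reason the step size is taken to be $n^{-2}$ rather than something larger is exactly to make the Lipschitz loss $\lesssim n^{-2}\eta^{-2}$ negligible against $\sqrt\kappa$; I would be careful to check that $\eta$ as defined in~\eqref{equation:defeta} is indeed $\gg n^{-1}$ so that $n^{-2}\eta^{-2}=\o(\sqrt\kappa)$, which holds because $\beta < 1/2$. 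A secondary bookkeeping point, to be handled exactly as in Lemma~\ref{lemma:startingpoint}, is that the indices in $\nn^k\setminus\nnku$ (the ``non-generic'' $k$-tuples) are a fraction at most $n^{-1/2}$ of all indices and so get absorbed into the error term $\O(\delta(z))$; this uses $n^{-1/2}\ll n^{-\alpha}$, i.e. $\alpha < 1/2$, which follows from $\alpha + 2\beta = \tfrac12 - \cz$ with $\beta>0$, $\cz>0$. With the step lemma in hand, Theorem~\ref{theorem:main} follows by applying it $\lceil \cb n^2\rceil$ times starting from Lemma~\ref{lemma:startingpoint}, taking a union bound over the $E$ in a fine net of $\interval$ and over the grid of heights, and passing from the Stieltjes transform estimate~\eqref{equation:maineqprop} to the eigenvalue counting estimate~\eqref{equation:maineqthm} by the usual Helffer--Sj\"ostrand / Stieltjes-inversion comparison.
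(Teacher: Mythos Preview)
Your proposal is correct and follows essentially the same bootstrap argument as the paper. The paper's own proof likewise uses the Lipschitz bound on $m$ to transfer the weak hypothesis from height $\eta_0$ to $\eta_0-n^{-2}$, then secures the lower bound $B(z)\gtrsim\sqrt{\kappa}$ (not $\kappa$ as you wrote) on $|m(z)|$ and on $\bigl|z-(1+\tfrac{1-p}{zm(z)})\bigr|$ via Lemma~\ref{lemma:constants} together with an explicit imaginary-part computation exploiting the smallness of $c_b$, and finally reruns the algebra of Lemma~\ref{lemma:startingpoint} with the concentration input from Lemma~\ref{lemma:concentration}.
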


We do not give all the details for the proof of Lemma~\ref{lemma:continuity};
the argument follows the general idea of the proof of Theorem 1.1 of \cite{ESY09a}, and more specifically
Lemma 3.16 of~\cite{EF13}.

\begin{proof}
The proof requires lower bounds on $|m(\step)|$ and $|z-(1+\frac{1-p}{zm(\step)})|$.
For the first term, we use that $|\frac{d}{d\eta_0}m(E+i\eta_0)|\leq n$ for all $E+i\eta_0\in \Omega$.
Therefore, if $|m(E+i\ez)-m_M(E+i\ez)|<\ca \sqrt{\kappa}$ for small enough $\ca$,  by~\eqref{equation:lowerboundImm} we also have
$|m(\step)|>\oh c_M\sqrt{\kappa}$.
By inequality~\eqref{equation:lowerboundImm}, there exists $\cc>0$ such that for all sufficiently small $\cb$,
\begin{equation*}
\frac{(1-p)\Re z\Im m_M(z)}{|zm_M(z)|^2}>\cc\sqrt{\kappa}
\end{equation*}
for all $z\in\Omega$.
We assume that $\cb$ is small enough so that
$(1-p)\frac{\Re m_M(z)\Im z}{|zm_M(z)|^2}<\oh\cc\sqrt{\kappa}$ holds
for all $z\in\Omega$.
We then have
\begin{equation*}
\left|z-\left(1+\overmM\right)\right|\geq \left|\Im z-\Im\overmM\right|\geq \frac{(1-p)\Re z\Im m_M(z)}{|zm_M(z)|^2}-\oh\cc\sqrt{\kappa}\geq\oh \cc\sqrt{\kappa}.
\end{equation*}
Therefore, if $|m(\step)-m_M(\step)|$ is sufficiently small, $\left|z-\left(1+\overm\right)\right| \geq \frac{1}{4}\cc\sqrt{\kappa}$.
We then set $B(z)=\frac{1}{4}\cc\sqrt{\kappa}$ and follow the proof of Lemma~\ref{lemma:startingpoint}.
\end{proof}

\begin{proof}[Proof of Theorem~\ref{theorem:main}]
By Lemma~\ref{lemma:startingpoint}, condition~\eqref{equation:condition} is satisfied for $z=E+i\cb$
for a fixed $E$.
We apply Lemma~\ref{lemma:continuity} iteratively at most $n^2$ times to obtain the desired bound for the point $E+i\eta$.
The derivative of $m(z)$ with respect to $E$ is bounded uniformly on $\Omega$ by $n$.
We discretize $\{z\in\C:\;z=E+i\eta_0,\;E\in\interval\}$ to a grid of at most $n$ equally spaced points.
If $|m(z)-m_M(z)|<n^{-\alpha}\kappa^{-2}$ for all the points in the grid, then~\eqref{equation:maineqprop} follows by taking a union bound.
Inequality~\eqref{equation:maineqthm} now follows from inequality~\eqref{equation:maineqprop} by the argument given to prove Corollary 4.2 in~\cite{ESY09}.
\end{proof}

\begin{proof}[Proof of Theorem~\ref{theorem:second}]
The proof of Theorem~\ref{theorem:second} requires only a simple adjustment to the proof of Theorem~\ref{theorem:main}. 
In particular, Lemma~\ref{lemma:concentration} simplifies in that the the exponent $k$ is now one and operator $M$ is now unitary.
The other necessary lemmas and the final proof are then unchanged.
\end{proof}

\vspace{.5cm}

\begin{center}
ACKNOWLEDGEMENT
\end{center}

\vspace{.1cm}

B. Farrell  was partially supported by Joel A. Tropp under ONR awards N00014-08-1-0883
and N00014-11-1002 and a Sloan Research Fellowship.
R.R. Nadakuditi was partially supported by an ONR Young Investigator Award  N000141110660, 
an AFOSR Young Investigator Award FA9550-12-1-0266, a ARO MURI grant W911NF-11-1-0391 and NSF CCF–1116115

\def\cprime{$'$} \def\cprime{$'$} \def\cprime{$'$}

\end{document}